\def\SL{{\rm SL}}
\def\C{\mathbb C}
\def\SL{{\rm SL}}
\def \qh{{\bf H}_{\mathbb H}}
\def\Sp{{\rm Sp}}
\newtheorem{theorem}{Theorem}[section]
\theoremstyle{definition}
\theoremstyle{remark}
\numberwithin{equation}{section}
\theoremstyle{plain}
\newtheorem{cor}[theorem]{Corollary}
\newtheorem{question}{Question}
\newtheorem*{theorema}{Theorem~{\rm WJC}}
\newcommand{\thmref}[1]{Theorem~\ref{#1}}
\newcommand{\eqnref}[1]{~{\textrm(\ref{#1})}}
\begin{document}
\title[On generalized J\o{}rgensen inequality in $\SL(2, \C)$ ]{On generalized J\o{}rgensen inequality in $\SL(2, \C)$}
\author[K.  Gongopadhyay]{Krishnendu Gongopadhyay}
 \address{ Indian Institute of Science Education and Research (IISER) Mohali,
Knowledge City, Sector 81, SAS Nagar, Punjab 140306, India}
\email{krishnendu@iisermohali.ac.in, krishnendug@gmail.com}
\author[M. M. Mishra]{Mukund Madhav Mishra}
\address{
  Department of Mathematics, Hansraj College, University of Delhi, Delhi 110007, India}
\email{mukund.math@gmail.com}
\author[D. Tiwari]{Devendra Tiwari}
\address{
  Department of Mathematics, University of Delhi, Delhi 110007, India}
\email{devendra9.dev@gmail.com}
\subjclass[2000]{Primary 20H10; Secondary 51M10}
\keywords{ J\o{}rgensen inequality, discreteness.  }
\date{\today}
\thanks{Gongopadhyay acknowledges partial support from SERB MATRICS grant MTR/2017/000355.  Tiwari is supported by NBHM-SRF}

%\tableofcontents
\begin{abstract}
Wang, Jiang and Cao have obtained a generalized version of the J\o{}rgensen inequality in { Proc. Indian Acad. Sci. Math. Sci., 123(2):245--251, 2013}, for two generator subgroups of $\SL(2, \C)$ where one of the generators is loxodromic. We prove that their inequality is strict. 
\end{abstract}
\maketitle

\section{Introduction}
The J\o{}rgensen inequality is a classical result that provides necessary condition of discreteness for a two generator subgroup of $\SL(2, \C)$, see \cite{j}. Extremality of the J\o{}rgensen inequality has been investigated in \cite{jk}. In the literature there are several generalizations of the J\o{}rgensen inequality, and extremalities of some of those inequalities have also been investigated, e.g. \cite{g1}, \cite{gm2}, \cite{m}, \cite{rv}. In this note we investigate extremality of one such generalized J\o{}rgensen inequality.

In \cite{wjc}, Wang, Jiang and Cao have obtained a generalized version of the J\o{}rgensen inequality for two generator subgroups of $\SL(2, \C)$ where one of the generators is loxodromic. We recall their result.

\begin{theorema} \label{th0} \cite{wjc} Let $g$, $h$ are elements in $\SL(2, \C)$ such that $g$ is loxodromic. Suppose that $g, h$ are of the form: for $|\lambda|>1$,
\begin{equation}\label{eqff} g=\begin{pmatrix} \lambda & 0 \\ 0 & \lambda^{-1} \end{pmatrix},  ~~~~~~~~ h=\begin{pmatrix} a & b \\ c & d \end{pmatrix}. \end{equation} 
Let $g$ be such that $M_g<1$,
where 
$$M_g=|\lambda-1| + |\lambda^{-1}-1|.$$
If $\langle g, h \rangle$ is discrete and non-elementary, then
\begin{equation} \label{eq1}  |abcd|^{\frac{1}{2}}\geq \frac{1-M_g}{M_g^2}. \end{equation}
\end{theorema}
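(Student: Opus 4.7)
The approach is the standard J\o{}rgensen-style iteration. Set $h_0=h$ and define $h_{n+1}=h_n g h_n^{-1}$; each $h_n$ lies in the discrete group $\langle g,h\rangle$. The plan is to show that if $|abcd|^{1/2}<(1-M_g)/M_g^2$, then for large $n$ the two-generator subgroup $\langle g,h_n\rangle\subseteq\langle g,h\rangle$ violates the classical J\o{}rgensen inequality, a contradiction.

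Writing $h_n=\begin{pmatrix}a_n&b_n\\c_n&d_n\end{pmatrix}$ and $t_n:=b_nc_n$ (so $a_nd_n=1+t_n$), direct matrix multiplication using the diagonal form of $g$ yields
\begin{align*}
a_{n+1}&=\lambda+t_n(\lambda-\lambda^{-1}), & d_{n+1}&=\lambda^{-1}-t_n(\lambda-\lambda^{-1}),\\
b_{n+1}&=a_n b_n(\lambda^{-1}-\lambda), & c_{n+1}&=c_n d_n(\lambda-\lambda^{-1}),
\end{align*}
so $t_{n+1}=-t_n(1+t_n)(\lambda-\lambda^{-1})^2$, and a parallel computation of $h_n g h_n^{-1}g^{-1}$ gives $\mathrm{tr}[h_n,g]-2=-t_n(\lambda-\lambda^{-1})^2$.

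Assume for contradiction $|abcd|^{1/2}<(1-M_g)/M_g^2$, i.e.\ $|t_0(1+t_0)|=|abcd|<(1-M_g)^2/M_g^4$. Put $\nu=\lambda-\lambda^{-1}$; the triangle inequality gives $|\nu|=|(\lambda-1)-(\lambda^{-1}-1)|\le M_g$. Then
$$|t_1|=|t_0(1+t_0)|\,|\nu|^2<(1-M_g)^2/M_g^2=:\alpha.$$
An elementary calculation gives $M_g^2(1+\alpha)=M_g^2+(1-M_g)^2=1-2M_g(1-M_g)<1$ for $M_g\in(0,1)$, so the quadratic map $u\mapsto M_g^2 u(1+u)$ strictly contracts $[0,\alpha]$ toward $0$. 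Since $|t_{n+1}|\le M_g^2|t_n|(1+|t_n|)$, induction gives $|t_n|\le\alpha\rho^{n-1}$ with ratio $\rho:=1-2M_g(1-M_g)<1$, and in particular $|t_n|\to 0$.

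For $n$ large enough that $|t_n|<1/2$ we have $t_n\notin\{0,-1\}$, hence all four entries of $h_n$ are nonzero, so $h_n$ sends $\{0,\infty\}$ to a disjoint pair. This forces $\langle g,h_n\rangle$ to be non-elementary (any finite $g$-invariant subset of $\widehat{\mathbb C}$ lies in $\{0,\infty\}$), and it is discrete as a subgroup of $\langle g,h\rangle$. The classical J\o{}rgensen inequality then yields
$$|\mathrm{tr}^2(g)-4|+|\mathrm{tr}[g,h_n]-2|=|\nu|^2(1+|t_n|)\ge 1.$$
Letting $n\to\infty$ gives $|\nu|^2\ge 1$, contradicting $|\nu|\le M_g<1$. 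The main obstacle is the contraction step in the previous paragraph: the precise shape of the hypothesis $|abcd|^{1/2}<(1-M_g)/M_g^2$ is exactly what places $t_1$ inside the contracting region of $u\mapsto M_g^2 u(1+u)$, through the elementary identity $M_g^2+(1-M_g)^2<1$ on $(0,1)$; everything else is a routine matrix computation plus an appeal to classical J\o{}rgensen.
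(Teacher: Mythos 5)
Your overall strategy --- iterate $h_{n+1}=h_n g h_n^{-1}$, show $t_n=b_nc_n\to 0$ under the contradiction hypothesis via the contraction $M_g^2(1+\alpha)=1-2M_g(1-M_g)<1$, and then contradict the classical J\o{}rgensen inequality for $\langle g,h_n\rangle$ --- is sound in its main line, and it is genuinely different from the source: the paper does not reprove Theorem~WJC but quotes it from Wang--Jiang--Cao, whose argument embeds $\SL(2,\C)$ into $\Sp(1,1)$ and invokes the Cao--Parker quaternionic J\o{}rgensen inequality, whereas your route stays inside $\SL(2,\C)$ and uses only the classical inequality. The matrix recursions, the bound $|\nu|\le M_g$, the estimate $|t_{n+1}|\le M_g^2|t_n|(1+|t_n|)\le \rho\,|t_n|$ on $[0,\alpha]$, and the non-elementarity argument when all four entries of $h_n$ are nonzero all check out.

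There is, however, a genuine gap at the step ``for $n$ large enough that $|t_n|<1/2$ we have $t_n\notin\{0,-1\}$'': smallness of $|t_n|$ rules out $-1$ but not $0$, and $t_n=0$ is precisely the case your mechanism cannot treat, because then $h_n$ is triangular, shares a fixed point with $g$, and $\langle g,h_n\rangle$ is \emph{elementary} (the common fixed point is a finite orbit), so the classical J\o{}rgensen inequality may not be applied to it. This case is not vacuous under your contradiction hypothesis: $|abcd|<(1-M_g)^2/M_g^4$ includes $abcd=0$, i.e.\ $t_0\in\{0,-1\}$, and since $t_{n+1}=-t_n(1+t_n)\nu^2$, any occurrence of $t_m\in\{0,-1\}$ forces $t_n=0$ for all $n>m$ (note $\alpha=(1-M_g)^2/M_g^2$ can exceed $1$ when $M_g$ is small, so even $t_m=-1$ for a small $m\ge 1$ is not excluded by your decay estimate). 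To close the gap you must dispose of these degenerate configurations separately: if some $h_m$ with $m\ge 1$ (hence a conjugate of $g$, so loxodromic) is diagonal, then tracing back through the conjugations shows $h$ itself maps $\{0,\infty\}$ onto itself, so $\langle g,h\rangle$ is elementary, contrary to hypothesis; if some $h_m$ is strictly triangular, then $g$ and $h_m$ are loxodromic elements sharing exactly one fixed point, which contradicts discreteness of $\langle g,h\rangle$ by the standard fixed-point lemma --- not by the inequality itself, since that two-generator subgroup is elementary. With this case analysis added, guaranteeing $t_n\notin\{0,-1\}$ for every $n$, the rest of your argument goes through and yields the stated bound.
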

We prove that the above inequality is strict, see \thmref{th1} below. Further, we note down a few generalized J\o{}rgensen type inequalities which are also strict. 
\section{Proof of Strictness of the Inequality} 
\begin{theorem} \label{th1}
Under the hypothesis of the above theorem, equality does not hold in {\rm \eqnref{eq1}}. 
\end{theorem}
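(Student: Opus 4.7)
The plan is to derive a contradiction by invoking Theorem~WJC a second time, applied to the pair $(g, h')$ with $h' := hgh^{-1}$. Assume equality holds in~\eqnref{eq1}, so that $|abcd|^{1/2} = (1-M_g)/M_g^2$; since $M_g < 1$ the right-hand side is strictly positive, forcing $abcd \neq 0$ and in particular every entry of $h$ to be nonzero.

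First I would record the matrix entries of $h'$ by a short calculation using $ad - bc = 1$. Writing $y := abcd$ and $\xi := (\lambda - \lambda^{-1})^2$, one finds $a'd' = 1 - y\xi$ and $b'c' = -y\xi$, so that $|a'b'c'd'| = |y|\,|\xi|\,|1 - y\xi|$. Next I would verify that $\langle g, h'\rangle$ is discrete and non-elementary. Discreteness is immediate since $h' \in \langle g, h\rangle$. For non-elementariness, observe that the loxodromic element $h'$ fixes exactly $h(0) = b/d$ and $h(\infty) = a/c$ on $\partial \H^3$; both are finite and nonzero because no entry of $h$ vanishes, so $\{b/d, a/c\} \cap \{0,\infty\} = \emptyset$, which rules out every elementary configuration for $(g, h')$.

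I would then apply Theorem~WJC to $(g, h')$ to obtain $|a'b'c'd'| \geq (1-M_g)^2/M_g^4 = |y|$. Dividing by $|y| > 0$ reduces this to $|\xi|\,|1 - y\xi| \geq 1$, and the triangle inequality $|1 - y\xi| \leq 1 + |y\xi|$ gives $|y| \geq (1-|\xi|)/|\xi|^2$. Using $|\lambda - \lambda^{-1}| \leq M_g$, so that $|\xi| \leq M_g^2 < 1$, and the fact that $t \mapsto (1-t)/t^2$ is strictly decreasing on $(0, 2)$, this upgrades to $|y| \geq (1-M_g^2)/M_g^4$. Combining with the equality $|y| = (1-M_g)^2/M_g^4$ produces $(1-M_g)^2 \geq (1-M_g)(1+M_g)$, i.e.\ $M_g \leq 0$, contradicting $M_g > 0$.

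The part I expect to require the most care is the verification that $\langle g, h'\rangle$ is non-elementary, since that is the only place where the hypothesis ``none of $a, b, c, d$ vanishes'' -- forced precisely by equality in \eqnref{eq1} -- enters in a non-computational way. Once that is in place, the proof reduces to two applications of Theorem~WJC linked by elementary modulus estimates.
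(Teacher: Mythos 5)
Your proof is correct, and while its opening move coincides with the paper's, its endgame is genuinely different and worth comparing. Like the paper, you pass to $h_1=hgh^{-1}$, verify that $\langle g,h_1\rangle$ is still discrete and non-elementary, and apply Theorem~WJC a second time; your justification of non-elementarity (two loxodromic elements whose fixed-point sets $\{0,\infty\}$ and $\{b/d,a/c\}$ are disjoint because equality forces $abcd\neq 0$, so any finite orbit would have to lie in both sets) is sound and in fact more explicit than the paper's one-line remark. The divergence comes after the second application of Theorem~WJC. The paper sandwiches $|a_1b_1c_1d_1|^{1/2}$ between $(1-M_g)/M_g^2$ and itself, forces equality throughout a chain of estimates, recognizes the resulting identity $|\lambda-\lambda^{-1}|^2(1+|b_1c_1|)=1$ as the equality case of the classical J\o{}rgensen inequality for $\langle g,h_1\rangle$, and then must invoke the J\o{}rgensen--Kiikka extremality theorem \cite{jk} to conclude that $g$ would be elliptic or parabolic. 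You avoid that external input entirely: writing $y=abcd$ and $\xi=(\lambda-\lambda^{-1})^2$, the computation $|a_1b_1c_1d_1|=|y|\,|\xi|\,|1-y\xi|$ together with the second WJC bound gives $|\xi|\,|1-y\xi|\geq 1$, and the triangle inequality plus $0<|\xi|\leq M_g^2<1$ and the monotonicity of $t\mapsto(1-t)/t^2$ yield $|y|\geq(1-M_g^2)/M_g^4>(1-M_g)^2/M_g^4=|y|$, the strictness coming precisely from $M_g>0$ (i.e.\ $\lambda\neq 1$). This is a cleaner, self-contained numerical contradiction; what it gives up is the structural information the paper's route exposes (that the extremal configuration would have to realize equality in the classical J\o{}rgensen inequality), but as a proof of Theorem~\ref{th1} it is complete and arguably preferable.
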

\begin{proof} 
If possible suppose equality holds in \eqnref{eq1}. Let $h_1=hgh^{-1}$. Then
\begin{eqnarray*}
h_1&=& {\begin{pmatrix} a & b \\ c & d \end{pmatrix}} \begin{pmatrix}  \lambda & 0 \\ 0 & \lambda^{-1} \end{pmatrix}  \begin{pmatrix} d & - b \\ -c & a \end{pmatrix}\\
&=& \begin{pmatrix} ad \lambda -bc \lambda^{-1} & -(\lambda - \lambda^{-1})ab \\ 
(\lambda-\lambda^{-1} ) cd &  ad\lambda^{-1} -bc \lambda \end{pmatrix}\\
&=& \begin{pmatrix} a_1 & b_1 \\ c_1 & d_1 \end{pmatrix}.
\end{eqnarray*}
Note that
$$b_1 c_1=-(\lambda-\lambda^{-1})^2 abcd=-(\lambda-\lambda^{-1})^2 bc(1+bc). $$
\begin{eqnarray*}
|a_1d_1| &=& |1+(\lambda-\lambda^{-1})^2abcd|\\
& \leq & 1+|\lambda-1+1-\lambda^{-1}||abcd|\\
& \leq & 1+\frac{(1-M_g)^2}{M_g^2}\\
&\leq & \frac{(M_g+1-M_g)^2}{M_g^2}.\\
\end{eqnarray*}
Thus we have 
\begin{equation} \label{11} |a_1d_1|^{\frac{1}{2}} \leq \frac{1}{M_g}. \end{equation} 
Also, we have $|b_1| \leq M_g|ab|$, $|c_1|\leq M_g|cd|$. Hence
\begin{equation}\label{12} |b_1 c_1|^{\frac{1}{2} }\leq \frac{1-M_g}{M_g}.\end{equation} 
In particular, 
\begin{equation}\label{3} M_g(1+|b_1 c_1|^{\frac{1}{2} })<1.\end{equation} 
 Now note that $\langle g, h_1 \rangle$ is discrete and non-elementary. Discreteness of $\langle g,h_1\rangle$ is obvious, and if it was elementary, that would have implied that $g$ and $h$ had a common fixed point and hence, would have contradicted the assumption that $\langle g,h\rangle$ is non-elementary.
  So, applying Theorem~{\rm WJC} to $\langle g, h_1\rangle$, we have 
\begin{eqnarray*} 
\frac{1-M_g}{M_g^2}& \leq & |a_1 b_1 c_1 d_1|^{\frac{1}{2}} \leq |a_1d_1|^{\frac{1}{2} }|b_1 c_1|^{\frac{1}{2}}\\
& \leq & \frac{1-M_g}{M_g^2},  ~~\hbox{ by \eqnref{11} and \eqnref{12}}. \end{eqnarray*}
This implies, $$|a_1 b_1 c_1 d_1|^{\frac{1}{2}}=\frac{1-M_g}{M_g^2}.$$
Next we observe that
\begin{eqnarray*}
\bigg( \frac{1-M_g}{M_g^2}\bigg)^2 & \leq |a_1 b_1 c_1 d_1| \\
& \leq & |1+b_1c_1||b_1 c_1| \\
& \leq & |1+|b_1 c_1||(\lambda-\lambda^{-1})^2 |b_0c_0||a_0d_0|\\
& \leq & (\lambda-\lambda^{-1})^2(1+|b_1 c_1|) \bigg( \frac{1-M_g}{M_g^2}\bigg)^2 \\
&\leq & M_g^2 (1 +|b_1c_1|+2|b_1c_1|^{\frac{1}{2}}) \bigg( \frac{1-M_g}{M_g^2}\bigg)^2\\
& \leq & \big(M_g(1+|b_1c_1|^{\frac{1}{2}} \big)^2 \bigg( \frac{1-M_g}{M_g^2} \bigg)^2\\
& \leq & \bigg( \frac{1-M_g}{M_g^2}\bigg)^2, ~~\hbox{ by \eqnref{3}}. 
\end{eqnarray*}
Hence we have 
\begin{equation} \label{4} (\lambda-\lambda^{-1})^2(1+|b_1 c_1|)=1. \end{equation}

Noting  that 
$$tr^2(g)-4=(\lambda-\lambda^{-1})^2, ~~\hbox{ and } ~~tr[g, h_1]-2=-(\lambda-\lambda^{-1})^2 b_1 c_1,$$ the above equality implies that $\langle g, h_1\rangle$ satisfies equality in the classical J\o{}rgensen inequality. By a theorem of J\o{}rgensen and Kiikka, see \cite[Theorem 2]{jk}, this implies that $g$ is either elliptic or parabolic, which is a contradiction.  Hence equality can not hold in \eqnref{eq1}. 
\end{proof} 

Combining Theorem~{\rm WJC} and \thmref{th1}, we can rephrase the generalized J\o{}rgensen inequality as follows. 
\begin{theorem}\label{th2}
Let $g$, $h$ are elements in $\SL(2, \C)$ such that $g$ is loxodromic. Suppose that $g, h$ are of the form {\rm \eqnref{eqff}}. Let $g$ be such that $M_g<1$. If 
\begin{equation} \label{eq00}  |abcd|^{\frac{1}{2}}\leq \frac{1-M_g}{M_g^2},  \end{equation}
then $\langle g, h \rangle$ is either elementary or non-discrete. 
\end{theorem}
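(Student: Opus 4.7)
The plan is to prove \thmref{th2} by a direct contrapositive argument combining Theorem~{\rm WJC} with \thmref{th1}. The statement of \thmref{th2} is logically equivalent to saying: if $\langle g, h \rangle$ is discrete and non-elementary (with the stated normal forms for $g$ and $h$ and $M_g < 1$), then $|abcd|^{1/2} > (1 - M_g)/M_g^2$, i.e.\ the inequality \eqnref{eq1} holds strictly. This strict inequality is exactly what Theorem~{\rm WJC} together with \thmref{th1} gives us.

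Concretely, I would argue as follows. Assume $|abcd|^{1/2} \leq (1 - M_g)/M_g^2$, and suppose for contradiction that $\langle g, h \rangle$ is both discrete and non-elementary. Then by Theorem~{\rm WJC} applied to the pair $(g, h)$ we obtain the reverse inequality $|abcd|^{1/2} \geq (1 - M_g)/M_g^2$. Combining the two inequalities forces the equality $|abcd|^{1/2} = (1 - M_g)/M_g^2$. But this contradicts \thmref{th1}, which rules out equality in \eqnref{eq1} under precisely these hypotheses. Hence our assumption was wrong and $\langle g, h \rangle$ must be either non-discrete or elementary.

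Since the entire content of \thmref{th2} is simply the contrapositive of the strengthened version of Theorem~{\rm WJC} obtained by \thmref{th1}, there is no real obstacle in this proof: the hard work has already been done in establishing strictness in \thmref{th1}. The only minor care is to make sure the hypotheses of Theorem~{\rm WJC} and \thmref{th1} are cleanly verified, namely that $g$ is loxodromic, that $g, h$ are in the normal form \eqnref{eqff}, and that $M_g < 1$ — all of which are part of the hypotheses of \thmref{th2}.
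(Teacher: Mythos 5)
Your proposal is correct and matches the paper exactly: the paper derives Theorem~\ref{th2} precisely by combining Theorem~{\rm WJC} with Theorem~\ref{th1} and taking the contrapositive, which is what you do. No issues.
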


\subsection{Some more inequalities} 
 The main idea in \cite{wjc} was to embed $\SL(2, \C)$ into the isometry group $\Sp(1,1)$ of the one dimensional quaternionic hyperbolic space $\qh^1$, and then use quaternionic J\o{}rgensen inequality of Cao and Parker, see \cite[Theorem 1.1]{cp}.  In view of the above theorem, following arguments as used in the proof of \cite[Corollary 1.2]{cp},  we note the following. 

\begin{cor}
Let $g$, $h$ are elements in $\SL(2, \C)$ such that $g$ is loxodromic. Suppose that $g, h$ are of the form \eqnref{eqff}. Let $g$ be such that $M_g<1$.  
If $\langle g, h \rangle$ is discrete and non-elementary, then each of the following strict inequalities holds. 
\begin{equation} \label{1} |bc|^{\frac{1}{2}} > \frac{1-M_g}{M_g}. \end{equation}

\begin{equation} \label{2}   |1+bc|^{\frac{1}{2}} > \frac{1-M_g}{M_g}. \end{equation}

\begin{equation} \label{4} |1+bc|+|bc| > \frac{2(1-M_g)}{M_g^2}. \end{equation}
\end{cor}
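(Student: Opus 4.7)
The plan is to deduce all three strict inequalities from a single master estimate obtained by combining Theorem~WJC with its strict refinement \thmref{th1}: for $\langle g,h\rangle$ discrete and non-elementary,
\[
|abcd|^{\frac{1}{2}} > \frac{1-M_g}{M_g^2}.
\]
The first step is to record the determinant identity $\det h = ad - bc = 1$, i.e.\ $ad = 1+bc$, which rewrites the master estimate as
\[
|1+bc|\,|bc| > \left(\frac{1-M_g}{M_g^2}\right)^{2}.
\]
From this single inequality each of \eqnref{1}, \eqnref{2}, and \eqnref{4} will fall out by a short elementary manipulation.

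For \eqnref{4} I would argue by contradiction using the arithmetic--geometric mean inequality: if \eqnref{4} failed, then
\[
|1+bc|\,|bc| \leq \left(\frac{|1+bc| + |bc|}{2}\right)^{2} \leq \left(\frac{1-M_g}{M_g^2}\right)^{2},
\]
contradicting the master estimate directly.

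For \eqnref{1} and \eqnref{2} the strategy is symmetric. To prove \eqnref{1} I would assume, toward a contradiction, that $|bc| \leq \left(\frac{1-M_g}{M_g}\right)^{2}$; dividing the master estimate by $|bc|$ then forces $|1+bc| > \frac{1}{M_g^{2}}$, while the triangle inequality yields $|1+bc| \leq 1 + |bc| \leq \frac{M_g^{2} + (1-M_g)^{2}}{M_g^{2}}$. Combining the two bounds gives $M_g^{2} + (1-M_g)^{2} > 1$, which fails since $M_g^{2} + (1-M_g)^{2} = 1 - 2M_g(1-M_g) < 1$ for $0 < M_g < 1$. The proof of \eqnref{2} is identical after swapping the roles of $|bc|$ and $|1+bc|$ and applying the triangle inequality as $|bc| \leq 1 + |1+bc|$. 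The only genuinely non-trivial point---and the one I would expect to be the main obstacle if any---is spotting this last elementary inequality $M_g^{2}+(1-M_g)^{2}<1$; beyond it, the entire corollary rests on the strict WJC already furnished by \thmref{th1}, with no further geometric input required.
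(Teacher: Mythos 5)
Your proof is correct and follows essentially the same route as the paper: both argue by contradiction against the strict form of Theorem~WJC (equivalently \thmref{th2}), using $ad=1+bc$ together with the triangle inequality for the first two inequalities and the AM--GM inequality for the third. The only cosmetic difference is that the paper closes the first two cases with the subadditivity bound $|ad|^{1/2}\leq 1+|bc|^{1/2}$ and the telescoping identity $\frac{1-M_g}{M_g}+\frac{(1-M_g)^2}{M_g^2}=\frac{1-M_g}{M_g^2}$, whereas you divide the master estimate and invoke $M_g^2+(1-M_g)^2<1$; both are valid.
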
 
\begin{proof} 
If possible, suppose $|bc|^{\frac{1}{2}} \leq  \frac{1-M_g}{M_g}$. Then
$$|ad|^{\frac{1}{2}} |bc|^{\frac{1}{2}} \leq (1+|bc|^{\frac{1}{2}}) |bc|^{\frac{1}{2}} \leq \frac{1-M_g}{M_g^2}.$$
Using \thmref{th2}, $\langle g, h \rangle$ is either discrete or non-elementary. 

If possible suppose  $|1+bc|^{\frac{1}{2}} \leq  \frac{1-M_g}{M_g}$. Then  it follows similarly as above noting that  $|bc|^{\frac{1}{2}} \leq 1+|ad|^{\frac{1}{2}}$ and $ad-bc=1$. 

Finally, if $|1+bc|+|bc|  \leq \frac{2(1-M_g)}{M_g^2}$, then
$$|ad|^{\frac{1}{2}} |bc|^{\frac{1}{2}} \leq \frac{1}{2}(|ad| + |bc|) \leq \frac{1-M_g}{M_g^2},$$
and the result follows from \thmref{th2}. 

This completes the proof.
\end{proof} 

In view of the results noted in this communication, the following question is natural to ask.
\begin{question}
What are sharp bounds for the inequalities \eqnref{eq1} and \eqnref{1} -- \eqnref{4}? 
\end{question}

%\bibliographystyle{plain}
%\bibliography{jg-ext}
\end{document}